\numberwithin{equation}{section}
\newtheorem{theorem}{Theorem} 
\newtheorem{corollary}[theorem]{Corollary}
\newtheorem{proposition}[theorem]{Proposition}
\newtheorem{lemma}[theorem]{Lemma}
\theoremstyle{remark}
\def\be{\begin{equation}}
\def\ee{\end{equation}}
\def\ve{\varepsilon}
\def\be{\begin{equation}}
\def\ee{\end{equation}}
\def\ve{\varepsilon}
\begin{document}

\title
[]
{On eigenvalue spacings for the 1-D Anderson model with singular site distribution}
\author
{J.~Bourgain}
\address
{Institute for Advanced Study, Princeton, NJ 08540}
\email
{bourgain@math.ias.edu}
\thanks{This work was partially supported by NSF grant DMS-1301619}

\abstract
We study eigenvalue spacings and local eigenvalue statistics for 1D
lattice Schr\"odinger operators with H\"older regular potential,
obtaining a version of Minami's inequality and Poisson statistics for the local eigenvalue spacings.  The main additional new input
are regularity properties of the Furstenberg measures and the density of states obtained in some of the author's earlier work.
\endabstract
\maketitle

\section{\bf Introduction}

This Note results from a few discussions with A.~Klein (UCI, summer 011) on Minami's inequality and the results
from \cite{G-K} on Poisson local spacing behavior for the eigenvalues of certain Anderson type models.
Recall that the Hamiltonian $H$ on the lattice $\mathbb Z^d$ has the form
\be\label{1.1}
H=\lambda V+\Delta
\ee
with $\Delta$ the nearest neighbor Laplacian on $\mathbb Z^d$ and $V= (v_n)_{n\in\mathbb Z^d}$ IID variables with a
certain distribution.
Given a box $\Omega\subset \mathbb Z^d$, $H_\Omega$ denotes the restriction of $H$ to $\Omega$ with Dirichlet boundary
conditions.
Minami's inequality, which is a refinement of Wegner's estimate, is a bound on the expectation that $H_\Omega$ has two
distinct eigenvalues in a given interval $I\subset \mathbb R$.
This quantity can be expressed as
\be\label{1.2}
\mathbb E\big[Tr\mathcal X_I(H_\Omega)\big(Tr\mathcal X_I(H_\Omega)-1\big)\big]
\ee
where the expectation is taken over the randomness $V$.
An elegant treatment may be found in \cite{C-G-K} (see in particular Theorem 2.1).

Assuming the site distribution has a bounded density, \eqref{1.2} satisfies the expected bound
\be\label{1.3}
C|\Omega|^2 |I|^2.
\ee
More generally, considering a site distribution probability measure $\mu$ which is H\"older with exponent $0<\beta\leq 1$, i.e.
\be\label{1.4}
\mu(I) \leq C|I|^\beta \text { for all intervals $I\subset \mathbb R$}
\ee
it is shown in \cite{C-G-K} that
\be\label{1.5}
\eqref{1.2} \leq C|\Omega|^2 |I|^{2\beta}.
\ee
For the sake of the exposition, we briefly recall the argument.
Rewrite \eqref{1.2} as
\be\label{1.6}
\mathbb E_V\Big[\sum_{j\in\Omega}\langle \delta_j, \mathcal X_I(H^{(V)}_\Omega) \delta_j> \big(Tr \mathcal
X_I(H^{(V)}_\Omega)-1\big)\Big]
\ee
where $(\delta_j)$ denote the unit vectors of $\mathbb Z^d$.
Introduce a second independent copy $W=(w_n)$ of the potential $V$.
Fixing $j\in\Omega$, denote by $(V_j^\bot, \tau_j)$ the potential with assignments $v_n$ for 
$n\not= j$ and $\tau_j$ for $n=j$.
Assuming $\tau_j\geq v_j$, it follows from rank-one perturbation theory that
\be\label{1.7}
Tr\mathcal X_I(H_\Omega^{(V)})\leq Tr\mathcal X_I \big(H_\Omega^{(V_j^\bot, \tau_j)}\big)+1
\ee
and hence
\be\label{1.8}
\eqref{1.6} \leq \mathbb E_V\mathbb E_W \Big[\sum_{j\in\Omega}\langle \delta_j, \mathcal
X_I(H_\Omega^{(V)})\delta_j\rangle \ Tr\mathcal X_I(H_\Omega^{(V_j^\bot, \Vert v_j\Vert _\infty +
w_j)})\Big].
\ee

Next, invoking the fundamental spectral averaging estimate (see \cite{C-G-K}, Appendix A), we have
\be\label{1.9}
\mathbb E_{v_j} [\langle \delta_j, \mathcal X_I(H_\Omega^{(V_j^\bot, v_j)}) \delta_j]\leq C|I|^\beta
\ee 
so that
\be\label{1.10}
\eqref{1.8} \leq C|I|^\beta \sum_{j\in\Omega} \mathbb E_{V_j^\bot}\mathbb E_{w_j}\big[Tr\mathcal
X_I \big(H_\Omega^{(V_j^\bot, \Vert v_j\Vert_\infty +w_j)}\big)\big].
\ee
The terms in \eqref{1.10} may be bounded using a Wegner estimate.
Applying again \eqref{1.9}, the $j$-term in \eqref{1.10} is majorized by $C|\Omega|\,|I|^\beta$, leading
to the estimate $ C|I|^{2\beta} |\Omega|^2$ for \eqref{1.2}.
It turns out that at least in 1D, one can do better than reapplying the spectral averaging estimate.
Indeed, it was shown in \cite{B1} that in 1D, $SO$'s with H\"older regular site distribution have
a smooth density of states.
This suggests in \eqref{1.5} a better $|I|$-dependence, of the form $|I|^{1+\beta}$.
Some additional work will be needed in order to turn the result from \cite{B1} into the required
finite scale estimate.
We prove the following (set $\lambda=1$ in \eqref{1.1}).

\begin{proposition}\label{Proposition1}
Let $H$ be a 1D lattice random $SO$ with H\"older site distribution satisfying \eqref{1.4} for
some $\beta>0$.
Denote $H_N=H_{[1, N]}$.
Then
\be\label{1.11}
\mathbb E[I\cap \text{\,Spec\,} H_N\not=\phi ] \leq C e^{-cN}+CN|I|.
\ee
\end{proposition}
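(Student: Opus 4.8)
I need to bound the probability that $H_N$ has an eigenvalue in a given interval $I$. The key insight from the introduction is that in 1D with Hölder site distribution, the density of states is smooth (from \cite{B1}), and the bound $CN|I|$ should reflect that the integrated density of states is Lipschitz. But I cannot directly invoke the infinite-volume density of states regularity — I need a finite-scale version.
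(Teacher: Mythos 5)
Your proposal stops at the point where the proof would need to begin: after correctly observing that the infinite-volume regularity of the density of states cannot be invoked directly and that a finite-scale substitute is needed, you supply no such substitute and no argument at all. As written there is nothing to check --- the entire mechanism that produces the bound $Ce^{-cN}+CN|I|$ is missing.

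Concretely, the paper's route (and the one you would need to reconstruct) runs through the transfer matrices $M_n(E)=\prod_{j=n}^{1}\begin{pmatrix} E-v_j&-1\\ 1&0\end{pmatrix}$. If $H_N$ has an eigenvalue $E\in I=[E_0-\delta,E_0+\delta]$ with Dirichlet boundary conditions, then (up to replacing $M_N$ by $M_N^{-1}$) one gets $\Vert M_N(E)e_1\Vert\leq 1$. The large deviation estimate gives $\log\Vert M_N(E_0)e_1\Vert>cN$ outside an event of probability $e^{-cN}$; this is the source of the first term in \eqref{1.11}. The whole content of the proof is then to show that $\log\Vert M_N(\cdot)e_1\Vert$ cannot drop from $cN$ to $0$ across an interval of length $2\delta$ except with probability $CN\delta$. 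This is done by bounding the $E$-derivative of $\log\Vert M_N(E)e_1\Vert$ by a sum of ratios $\Vert M_{N-n}\Vert/\Vert M_{N-n}\zeta_n\Vert$ with $\zeta_n$ depending only on the first $n-1$ potential values, and then invoking the anti-concentration estimate from \cite{B1} (that $(v_1,\dots,v_n)\mapsto M_n$ has a bounded density on $SL_2(\mathbb R)$ for $n=O(1)$, whence $\mathbb E[\Vert M_\ell\zeta\Vert<\ve\Vert M_\ell\Vert]\lesssim\ve$ and $\mathbb E[\Vert M_\ell\Vert/\Vert M_\ell\zeta\Vert]\leq C\ell$). This yields $\mathbb E[\int_I|\partial_E\log\Vert M_Ne_1\Vert|]\leq CN^2\delta$, and Chebyshev converts this into the probability bound $CN\delta$. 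None of these steps --- the reduction to transfer matrices, the large deviation input, the derivative formula, or the crucial anti-concentration estimate replacing the density-of-states regularity at finite scale --- appears in your proposal, so it cannot be accepted as a proof.
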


It follows that $\mathbb E[Tr\mathcal X_i(H_N)]\leq Ce^{-cN} +CN^2|I|$.

The above discussion then implies the following Minami-type estimate.  

\begin{corollary}\label{Corollary2}
Under the assumption from Proposition \ref{Proposition1}, we have
\be\label{1.12}
\mathbb E[Tr\mathcal X_I(H_\Omega)(Tr\mathcal X_I(H_\Omega)-1)] \leq C|\Omega|^3 |I|^{1+\beta}
\ee
provided $\Omega\subset\mathbb Z$ is an interval of size $|\Omega|> C_1\log (2+\frac 1{|I|})$,
where $C, C_1$ depend on $V$.
\end{corollary}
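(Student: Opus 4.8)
The plan is to feed Proposition~\ref{Proposition1} --- or rather the estimate $\mathbb E[Tr\,\mathcal X_I(H_\Omega)]\le Ce^{-c|\Omega|}+C|\Omega|^2|I|$ recorded just below it (valid for an arbitrary interval $\Omega$ by translation invariance of the potential) --- into the spectral averaging scheme of \eqref{1.6}--\eqref{1.10}, using it in place of the \emph{second} invocation of the spectral averaging bound \eqref{1.9}. First I would repeat \eqref{1.6}--\eqref{1.10} verbatim: expand \eqref{1.2} over the diagonal $j\in\Omega$, introduce the independent copy $W=(w_n)$, apply the rank-one monotonicity inequality \eqref{1.7} to pass to \eqref{1.8}, and then --- since, conditionally on $V_j^\bot$, the two factors of the $j$-th summand of \eqref{1.8} depend only on the independent variables $v_j$ and $w_j$ respectively --- average out $v_j$ by \eqref{1.9}, which yields
\be
\eqref{1.2}\le C|I|^\beta\sum_{j\in\Omega}\mathbb E_{V_j^\bot}\mathbb E_{w_j}\big[Tr\,\mathcal X_I\big(H_\Omega^{(V_j^\bot,\,\|v_j\|_\infty+w_j)}\big)\big].
\ee

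The new point is the bound for the $j$-th summand. For fixed $j$ the potential $(V_j^\bot,\|v_j\|_\infty+w_j)$ is i.i.d.\ with H\"older-$\beta$ site distribution \emph{except} at the single site $j$, where the law is a translate of $\mu$ --- still H\"older with exponent $\beta$, and still of bounded support. I would check that Proposition~\ref{Proposition1}, hence the above consequence, continues to hold for such a potential with unchanged constants: its proof rests only on positivity of the Lyapunov exponent, on large-deviation estimates for the transfer matrices across the scale $|\Omega|$, and on the regularity of the density of states established in \cite{B1}, and none of these is disturbed by altering one site's distribution. Granting this, each $j$-term is $\le Ce^{-c|\Omega|}+C|\Omega|^2|I|$, and summing over the $|\Omega|$ values of $j$,
\be
\eqref{1.2}\le C|\Omega|\,|I|^\beta\,e^{-c|\Omega|}+C|\Omega|^3|I|^{1+\beta}.
\ee

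It remains to absorb the exponential term, and this is where the hypothesis $|\Omega|>C_1\log\!\big(2+\tfrac1{|I|}\big)$ is used: it gives $e^{-c|\Omega|}\le\big(2+\tfrac1{|I|}\big)^{-cC_1}\le|I|^{cC_1}$, so fixing $C_1$ large in terms of $c$ (hence in terms of $V$) forces $|I|^\beta e^{-c|\Omega|}\le|\Omega|^2|I|^{1+\beta}$ whenever $|I|\le 1$; for $|I|>1$ the bound \eqref{1.12} is immediate from $Tr\,\mathcal X_I(H_\Omega)\le|\Omega|$. Combining the two displays gives \eqref{1.12}.

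The step I expect to be the real obstacle is the stability claim in the second paragraph --- that the finite-scale estimate of Proposition~\ref{Proposition1} is insensitive to a change of the site distribution at a single point. I would want to obtain it by revisiting the proof of Proposition~\ref{Proposition1}, rather than by the cheaper device of comparing $H_\Omega^{(V_j^\bot,\|v_j\|_\infty+w_j)}$ with a genuinely i.i.d.\ operator through one more rank-one perturbation: that device introduces an additive $O(1)$ in each $j$-term, contributing $C|\Omega|\,|I|^\beta$ in all after the prefactor $|I|^\beta$, and in the regime of exponentially small $|I|$ permitted by the hypothesis this would dominate the target $C|\Omega|^3|I|^{1+\beta}$.
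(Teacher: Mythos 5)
Your proposal is correct and follows essentially the same route as the paper: the paper's "proof" of Corollary~\ref{Corollary2} is precisely to run the scheme \eqref{1.6}--\eqref{1.10} and then bound each $j$-term of \eqref{1.10} by the consequence $\mathbb E[Tr\,\mathcal X_I(H_N)]\le Ce^{-cN}+CN^2|I|$ of Proposition~\ref{Proposition1} instead of reapplying \eqref{1.9}, absorbing the exponential term via $|\Omega|>C_1\log(2+\frac1{|I|})$. Your extra attention to the stability of Proposition~\ref{Proposition1} under modifying the law at the single site $j$ is a point the paper passes over silently, and your resolution of it is the right one.
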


Denote $\mathcal N$ the integrated density of states (IDS) of $H$ and $k(E)=\frac{d\mathcal N}{dE}$.
Recall that $k$ is smooth for H\"older regular site distribution (cf. \cite{B1}).

Combined with Anderson localization, Proposition \ref{Proposition1} and Corollary \ref{Corollary2} permit to derive for $H$ as above.

\begin{proposition}\label{Proposition3}

Assuming $\log \frac 1\delta <cN$, we have for $I=[E_0-\delta, E_0+\delta]$ that
\be\label{1.13}
\mathbb E[Tr\mathcal X_I(H_N)]= Nk(E_0)|I| +O\Big(N\delta^2+\delta\log \Big(N+\frac 1\delta\Big)\Big)
\ee
\end{proposition}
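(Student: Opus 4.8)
\medskip
\noindent\emph{Proof proposal.}
Write $|I|=2\delta$. The plan is to peel off a soft analytic step and then to compare $H_N$ with a periodic operator. Since $k$ is smooth by \cite{B1}, Taylor expanding about $E_0$ and integrating over the interval $I$, which is symmetric about $E_0$, gives $\int_I k(E)\,dE=|I|k(E_0)+O(\delta^2)$, hence $N\int_I k=Nk(E_0)|I|+O(N\delta^2)$; so \eqref{1.13} is equivalent to
\begin{equation*}
\mathbb E\big[\Tr\mathcal X_I(H_N)\big]=N\int_I k(E)\,dE+O\Big(\delta\log\big(N+\tfrac1\delta\big)\Big).\tag{$\ast$}
\end{equation*}
Let $H_N^{\#}$ denote the operator on $\ell^2(\mathbb Z/N\mathbb Z)$ with the same IID potential. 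By translation covariance on the cyclic group, $\mathbb E[\langle\delta_j,\mathcal X_I(H_N^{\#})\delta_j\rangle]$ does not depend on $j$, so $\mathbb E[\Tr\mathcal X_I(H_N^{\#})]=N\,\mathbb E[\langle\delta_0,\mathcal X_I(H_N^{\#})\delta_0\rangle]$, while $\mathbb E[\langle\delta_0,\mathcal X_I(H)\delta_0\rangle]=\mathcal N(E_0+\delta)-\mathcal N(E_0-\delta)=\int_I k$ because the IDS is the expectation of the spectral measure of the whole-line operator $H$ at a site. Since $H_N^{\#}$ is obtained from $H_N$ by switching on the single bond (of rank at most two) that closes the circle, and agrees locally near site $0$ with $H$ away from that seam, which lies at distance $\sim N/2$ from $0$, the estimate $(\ast)$ follows from the two bounds (i) $\mathbb E[|\Tr\mathcal X_I(H_N)-\Tr\mathcal X_I(H_N^{\#})|]\le C\delta\log(N+\tfrac1\delta)$ and (ii) $|\mathbb E[\langle\delta_0,\mathcal X_I(H_N^{\#})\delta_0\rangle]-\int_I k(E)\,dE|\le Ce^{-cN}$, together with the remark that $Ne^{-cN}=O(\delta)$, which holds because the hypothesis $\log\tfrac1\delta<cN$ forces $\delta>e^{-cN}$ (take $c$ small relative to the localization rate).

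Estimate (ii) is a localization estimate. Using Stone's formula, $\langle\delta_0,\mathcal X_I(A)\delta_0\rangle=\tfrac1\pi\int_I\lim_{\eta\downarrow0}\mathrm{Im}\,\langle\delta_0,(A-E-i\eta)^{-1}\delta_0\rangle\,dE$, and expanding the resolvent of $H_N^{\#}$ about that of $H$ by the resolvent identity across the seam, the integrand is dominated by matrix elements of $(H-z)^{-1}$ joining $0$ to the seam; by positivity of the Lyapunov exponent near $E_0$ these decay exponentially in $N$, uniformly down to the real axis, off an event of probability $\le Ce^{-cN}$. This gives (ii); any fixed power of $\delta$ lost to the non-smooth cut-off $\mathcal X_I$ is negligible against $e^{-cN}$, since $\delta>e^{-cN}$.

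Estimate (i) is the heart of the matter. Being the increment across $I$ of the spectral shift function of a perturbation of rank at most two, $\Tr\mathcal X_I(H_N)-\Tr\mathcal X_I(H_N^{\#})$ is an integer of modulus at most two, so $\mathbb E[|\cdot|]\le 2\,\mathbb P[\,\cdot\neq0\,]$ and it suffices to bound this probability by $C\delta\log(N+\tfrac1\delta)$. It can be nonzero only if $H_N$ or $H_N^{\#}$ has an eigenvalue in $I$ — or within $e^{-cL}$ of $\partial I$ — whose eigenfunction carries non-negligible mass within distance $L:=K\log(N+\tfrac1\delta)$ of the seam ($K$ a large absolute constant): any other eigenvalue is, by exponential localization, so small at the seam that it is shared, up to an error $O(e^{-cL})$, by both operators. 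Off an event of probability $\le Ce^{-cN}$ (failure of localization for $H_N$ or $H_N^{\#}$), the existence of such an eigenvalue forces some $O(L)$-site restriction of $H$ about the seam to have an eigenvalue in the $Ce^{-cL}$-neighbourhood of $I$; by the Wegner bound of Proposition~\ref{Proposition1} this has probability at most $C\big(e^{-cL}+L(|I|+e^{-cL})\big)$, which for $K$ large enough is $\le C\delta\log(N+\tfrac1\delta)$ (using $Ne^{-cL}\le C\delta$, and $L<N$ from $\log\tfrac1\delta<cN$). This proves (i), hence \eqref{1.13}.

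The step I expect to be the real obstacle is the claim used above — that the trace difference can be nonzero only through eigenvalues localized near the seam. Making it rigorous means matching the eigenvalues of $H_N$ with those of $H_N^{\#}$ through their localized eigenfunctions, which requires localization bounds of SULE type for $H_N$ and $H_N^{\#}$ with constants uniform in $N$; the Minami bound of Corollary~\ref{Corollary2}, applied on the $O(L)$-window around the seam, is precisely what rules out two eigenvalues of $H$ colliding there inside a neighbourhood of $I$, so that the matching is well defined. A recurring technical point, in both (i) and (ii), is that $\mathcal X_I$ is a rough spectral projection, so the localization input — most cleanly phrased as uniform fractional-moment decay of the Green's function — has to be funneled through eigenfunction correlators to control the $\ell^1$-type quantities above; this conversion, rather than any single inequality, is where most of the work sits.
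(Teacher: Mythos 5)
Your strategy is sound and arrives at the correct error term, but by a genuinely different route. The paper covers $[1,N]$ by boxes $\Lambda_s$ of size $M=C\log(N+\frac1\delta)$, uses 1D localization to assign each eigenvalue of $H_N$ to a box, and gets the main term directly as $N\mathcal N(I)$ by comparison with the infinite-line operator (for which $\mathcal N(I)=\mathbb E[\langle\delta_0,\mathcal X_I(H)\delta_0\rangle]$); the entire discrepancy is charged to the two boundary boxes $\Lambda_1,\Lambda_{s_*}$ and estimated by Proposition~\ref{Proposition1} and Corollary~\ref{Corollary2}, giving $O(M\delta)$, after which Lipschitz continuity of $k$ yields $Nk(E_0)|I|+O(N\delta^2)$ exactly as in your first reduction. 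You instead compare with the periodized operator $H_N^{\#}$: translation covariance on the circle replaces the IDS identity, the rank-two spectral-shift bound gives the a priori bound on the trace difference, and the seam plays the role of the paper's two boundary boxes --- your estimate (i) is proved by the same combination of localization plus Proposition~\ref{Proposition1} on a $\log$-sized window that the paper applies to $\Lambda_1,\Lambda_{s_*}$. What the circle buys is the clean averaging identity and the uniform bound $|\Tr\mathcal X_I(H_N)-\Tr\mathcal X_I(H_N^{\#})|\le 4$ (note: a rank-two perturbation shifts the counting function by at most $2$ at each endpoint, so the bound is $4$, not $2$); what it costs is the extra step (ii) comparing circle with line, which the paper's direct line comparison avoids.

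Three points need repair, none fatal. First, localization at precision $e^{-cL}$ with $L=K\log(N+\frac1\delta)$ fails only with probability polynomially small in $N+\frac1\delta$, not $e^{-cN}$; this is still $O(\delta)$ for $K$ large, so your bound survives, but the claim as written is too strong. Second, in (ii), ``uniformly down to the real axis'' hides the fact that $\lim_{\eta\downarrow0}\mathrm{Im}\,\langle\delta_0,(H-E-i\eta)^{-1}\delta_0\rangle$ does not exist on the point spectrum; one must work at a fixed height $\eta=e^{-c'N}$ and pay an extra $\mathbb E[\mu_0(\partial I\pm\eta)]=O(\eta)$ via the Wegner bound, or route the comparison through eigenfunction correlators as you indicate at the end. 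Third, the eigenvalue matching across the seam does not actually require Minami: if $k$ eigenvectors of $H_N$ localized away from the seam have energies in $I$, their restrictions are $k$ nearly orthogonal approximate eigenvectors of $H_N^{\#}$, which forces at least $k$ eigenvalues of $H_N^{\#}$ in a slight enlargement of $I$ by the standard counting lemma; only the $e^{-cL}$-collars of $\partial I$ and the seam window then remain, and both are handled by Proposition~\ref{Proposition1}. With these adjustments your argument closes and gives \eqref{1.13}.
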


and

\begin{proposition}\label{Proposition4}
\be\label{1.14}
\mathbb E[H_\Omega \text { has at least two eigenvalues in } I]\leq C|\Omega|^2|I|^2+C|\Omega|\log^2
\Big(|\Omega|+\frac 1{|I|}\Big).|I|^{1+\beta}.
\ee
\end{proposition}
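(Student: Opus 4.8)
The plan is to decouple $\Omega$ into blocks of logarithmic length and feed the finite-scale Wegner bound \eqref{1.11} and the Minami-type bound \eqref{1.12} into the standard two-block alternative, with Anderson localization supplying the reduction to blocks. Write $I=[E_0-\delta,E_0+\delta]$. I would first fix a scale $\ell=C_0\log\!\big(|\Omega|+\tfrac1{|I|}\big)$ with $C_0$ a large constant, chosen so that Corollary \ref{Corollary2} applies to every interval of length $\ge\ell$ and so that the localization statement below holds at scale $\ell$; as in Corollary \ref{Corollary2} one may assume $|\Omega|$ is at least a large multiple of $\ell$, the complementary range being handled in the same way with $\Omega$ itself playing the role of a single block. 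Then partition $[1,|\Omega|]$ into $M=|\Omega|/\ell$ consecutive blocks $\Omega_1,\dots,\Omega_M$ of length $\ell$, and put $I'=[E_0-K\delta,E_0+K\delta]$ with $K$ an absolute constant fixed below.

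The essential input is localization. For the 1D model with H\"older site distribution the Lyapunov exponent is positive at every energy; upgrading the regularity of the Furstenberg measures and of $k$ from \cite{B1} to a finite-scale statement, I expect that off an event of probability $\le\big(|\Omega|+\tfrac1{|I|}\big)^{-100}\le|\Omega|^2|I|^2$ every eigenfunction $\psi$ of $H_\Omega$ with eigenvalue in $I$ has a localization center $n_\psi$ satisfying $|\psi(n)|\le e^{-\gamma|n-n_\psi|}$ for $|n-n_\psi|\ge\ell$, $\gamma>0$ fixed; such a $\psi$ is then, up to an $\ell^2$-error $\le\big(|\Omega|+\tfrac1{|I|}\big)^{-50}$, supported on the union of the block containing $n_\psi$ and its two neighbours. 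I would work from here on on this good event. Since eigenvalues of $H_\Omega$ are simple in 1D, two eigenvalues in $I$ give orthonormal eigenfunctions $\psi_1,\psi_2$ with distinct eigenvalues $E_1,E_2\in I$ and centers in blocks $\Omega_{m_1},\Omega_{m_2}$, and I would split according to whether $|m_1-m_2|\le 2$ or $|m_1-m_2|\ge 3$.

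In the \emph{first case}, $\psi_1$ and $\psi_2$ are both, up to negligible error, supported on a single interval $B$ that is a union of at most $5$ consecutive blocks; there are $\le M$ such $B$, and $\ell\le|B|\le 5\ell$, so Corollary \ref{Corollary2} applies to $B$. Restricting $\psi_1,\psi_2$ to $B$ yields nearly orthonormal vectors $\varphi_1,\varphi_2$ with $\|(H_B-E_k)\varphi_k\|$ exponentially small in $\ell$; by the min-max principle $H_B$ then has at least two eigenvalues in $I'$ (this fixes $K$). Hence Corollary \ref{Corollary2} gives $\mathbb E\big[H_B\text{ has }\ge 2\text{ eigenvalues in }I'\big]\le\mathbb E\big[\Tr\mathcal X_{I'}(H_B)\big(\Tr\mathcal X_{I'}(H_B)-1\big)\big]\le C\ell^3|I|^{1+\beta}$, and summing over the $\le M=|\Omega|/\ell$ choices of $B$ contributes $C|\Omega|\ell^2|I|^{1+\beta}=C|\Omega|\log^2\!\big(|\Omega|+\tfrac1{|I|}\big)|I|^{1+\beta}$, the second term of \eqref{1.14}. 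In the \emph{second case}, $\psi_1$ is, up to negligible error, supported on $B_1:=\Omega_{m_1-1}\cup\Omega_{m_1}\cup\Omega_{m_1+1}$ and $\psi_2$ on the analogous $B_2$, with $B_1\cap B_2=\emptyset$; restricting as above forces $H_{B_1}$ and $H_{B_2}$ each to have an eigenvalue in $I'$. As $B_1$ and $B_2$ are disjoint, $H_{B_1}$ and $H_{B_2}$ depend on disjoint families of the $v_n$ and are therefore independent, so \eqref{1.11} applied to each (of length $\le 3\ell$, absorbing the $e^{-c\ell}$ term into $\ell|I|$ since $e^{-c\ell}\le|I|$) gives $\mathbb E\big[H_{B_1}\text{ and }H_{B_2}\text{ each have an eigenvalue in }I'\big]\le(C\ell|I|)^2$; summing over the $\le M^2$ ordered disjoint pairs $(B_1,B_2)$ contributes $M^2(C\ell|I|)^2=C^2|\Omega|^2|I|^2$, the first term of \eqref{1.14}. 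Adding the localization-failure probability and the two cases yields \eqref{1.14}.

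The main obstacle is the localization input in exactly this form: one must promote the results of \cite{B1} — smoothness of the density of states and regularity of the Furstenberg measures for H\"older site distribution — to a genuinely finite-scale statement giving exponential localization of the eigenfunctions of $H_\Omega$ with localization length $O\!\big(\log(|\Omega|+1/|I|)\big)$ and failure probability below $\big(|\Omega|+1/|I|\big)^{-100}$, precisely the ``finite scale estimate'' anticipated in the Introduction. With that in hand the remaining steps — the block decomposition, the min-max passage from two eigenfunctions of $H_\Omega$ to two eigenvalues of the $O(\log)$-sized block $B$, the independence in the second case, and the bookkeeping of $\ell$, $K$ and the $e^{-c\ell}$ terms — are routine.
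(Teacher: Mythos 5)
Your proposal is correct and follows essentially the same route as the paper: a partition of $\Omega$ into blocks of length $\sim\log(|\Omega|+1/|I|)$, Anderson localization to confine each eigenfunction (up to exponentially small error) to a few consecutive blocks, then the dichotomy between far-apart centers (disjoint, hence independent, blocks each carrying an eigenvalue, bounded via Proposition~\ref{Proposition1} squared) and nearby centers (one $O(\log)$-sized block carrying two eigenvalues, bounded via Corollary~\ref{Corollary2}). The paper likewise takes the finite-scale localization statement \eqref{3.1} as a known consequence of the large deviation estimates for the transfer matrix, so the input you flag as the main obstacle is exactly the one the paper invokes without further proof.
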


Following a well-known strategy, Anderson localization permits a decoupling for the contribution of
pairs of eigenvectors with center of localization that are at least $C\log \frac 1{|I|}$-apart.
Invoking \eqref{1.11}, this yields the first term in the r.h.s of \eqref{1.14}.
For the remaining contribution, use Corollary 2.

With Proposition \ref{Proposition3}, \ref{Proposition4} at hand and again exploiting Anderson localization, the
analysis from \cite{G-K} becomes available and we obtain the following universality statement
for 1D random $SO$'s with H\"older regular site distribution.

\begin{proposition}
\label{Proposition5}
Let $E_0\in\mathbb R$ and $I=[E_0, E_0+\frac LN]$ where we let first $N\to\infty$ and then
$L\to\infty$.
The rescaled eigenvalues
$$
\{N(E-E_0)\mathcal X_I(E)\}_{E\in\text{\,Spec\,} H_N}
$$
satisfy Poisson statistics.
\end{proposition}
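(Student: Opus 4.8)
The plan is to follow the strategy of \cite{G-K}: realize the rescaled eigenvalue point process of $H_N$ near $E_0$ as an asymptotically independent superposition of $\sim N/\ell$ weak point processes coming from box restrictions of length $\ell$, and then invoke the standard criterion for convergence of a row-independent array of point processes to a Poisson process. The decoupling is supplied by Anderson localization, the limiting intensity by Proposition \ref{Proposition3}, and the absence of multiple points by Corollary \ref{Corollary2} / Proposition \ref{Proposition4}. All energies below lie in a fixed compact neighborhood $\mathcal I_0$ of $E_0$; on $\mathcal I_0$ the Lyapunov exponent is bounded below by some $\gamma>0$ (Furstenberg), and, with probability $1-O(N^{-p})$ for every $p$, each eigenfunction of $H_N$ or of any sub-box with eigenvalue in $\mathcal I_0$ decays like $e^{-\gamma|\cdot-x|/2}$ away from a localization center $x$ once the distance to $x$ exceeds $C_0\log N$. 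The target is to show that the process $\{N(E-E_0)\}_{E\in\mathrm{Spec}\,H_N\cap I}$ converges, as $N\to\infty$ with $I=[E_0,E_0+\tfrac LN]$, to a Poisson process of constant intensity $k(E_0)$ on $[0,L]$, and then let $L\to\infty$.

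First I would fix the intermediate scale $\ell=(\log N)^2$ together with a random translate $t\in\{1,\dots,\ell\}$, and split $[1,N]$ into $M\sim N/\ell$ consecutive blocks $\Lambda_1,\dots,\Lambda_M$ of length $\ell$ (the end blocks possibly shorter). Since the site variables on distinct blocks are independent, the restrictions $H_{\Lambda_1},\dots,H_{\Lambda_M}$ are independent given $t$, and the point processes $\zeta_{N,j}(J):=\#\{E\in\mathrm{Spec}\,H_{\Lambda_j}:\,N(E-E_0)\in J\}$ form an independent array whose superposition I denote $\zeta'_N$; the process in the statement is $\zeta_N(J):=\#\{E\in\mathrm{Spec}\,H_N:\,N(E-E_0)\in J\}$. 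The decoupling step is the comparison $\zeta_N\approx\zeta'_N$: by the exponential decay above, an eigenfunction of $H_N$ with eigenvalue in the $\tfrac LN$-window whose localization center is at distance $\geq C_0\log N$ from every block boundary agrees, up to an energy error $O(e^{-c\ell})\ll\tfrac 1N$, with an eigenfunction of the block containing it, and conversely; since $Ne^{-c\ell}\to0$ the usual eigenfunction-counting argument (cf. \cite{G-K}) turns this into an exact matching between all but the ``boundary'' points of the two processes. The expected number of boundary points is handled by averaging $t$ and applying Proposition \ref{Proposition3} to $H_N$ directly (legitimate since $\log\tfrac 1{|I|}<cN$):
\[
\mathbb E_{t}\,\mathbb E_{V}\Big[\#\big\{E\in\mathrm{Spec}\,H_N\cap I:\ \mathrm{dist}(x_E,\,\text{block boundaries})<C_0\log N\big\}\Big]\ \lesssim\ \frac{C_0\log N}{\ell}\,\mathbb E_V\big[\Tr\mathcal X_I(H_N)\big]\ \lesssim\ \frac{L}{\log N}\ \longrightarrow\ 0,
\]
and the same averaging absorbs the $O(e^{-c\ell})$ distortion at the two endpoints of the window. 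Hence $\zeta_N$ and $\zeta'_N$ have the same distributional limit, and it remains to identify that of $\zeta'_N$.

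For the superposition limit it suffices to check, for every bounded interval $J=[a,b]\subset[0,\infty)$ (writing $|I|=\tfrac{b-a}N$ and $T_j=\Tr\mathcal X_{E_0+J/N}(H_{\Lambda_j})$): \emph{(i)} $\max_j\mathbb P[\zeta_{N,j}(J)\geq1]\to0$; \emph{(ii)} $\sum_j\mathbb P[\zeta_{N,j}(J)\geq2]\to0$; and \emph{(iii)} $\sum_j\mathbb P[\zeta_{N,j}(J)\geq1]\to k(E_0)(b-a)$. Proposition \ref{Proposition3} on a block of length $\ell$ gives $\mathbb E[T_j]=\ell k(E_0)|I|+O(\ell|I|^2+|I|\log(\ell+|I|^{-1}))=O\big((\log N)^2L/N\big)$, which yields \emph{(i)}, and summing over the $M$ blocks gives $\sum_j\mathbb E[T_j]=Nk(E_0)|I|+o(1)=k(E_0)(b-a)+o(1)$. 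Corollary \ref{Corollary2} (applicable because $\ell>C_1\log(2+|I|^{-1})$ for $N$ large) gives $\mathbb P[\zeta_{N,j}(J)\geq2]\leq\mathbb E[T_j(T_j-1)]\leq C\ell^3|I|^{1+\beta}$, hence $\sum_j\mathbb P[\zeta_{N,j}(J)\geq2]\leq C\ell^2(b-a)^{1+\beta}N^{-\beta}\to0$, which is \emph{(ii)}; moreover $\sum_j\big(\mathbb E[T_j]-\mathbb P[\zeta_{N,j}(J)\geq1]\big)\leq\sum_j\mathbb E[T_j(T_j-1)]\to0$, so \emph{(iii)} follows from the value of $\sum_j\mathbb E[T_j]$. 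By the superposition theorem, $\zeta'_N$ — hence $\zeta_N$ — converges in distribution to a Poisson point process of intensity $k(E_0)\,dx$ on $[0,\infty)$; restricting to $J=[0,L]$ and then letting $L\to\infty$ is exactly the claimed Poisson statistics.

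The main obstacle is the decoupling step. It requires finite-volume Anderson localization for the 1D model with only H\"older site distribution, equipped with quantitative eigenfunction-decay estimates holding with overwhelming probability and uniformly over the fixed neighborhood $\mathcal I_0$ of $E_0$ (positivity of the Lyapunov exponent via Furstenberg, together with the attendant large-deviation and Wegner input, the latter being where the H\"older regularity enters), and then the standard but somewhat delicate bookkeeping that upgrades the eigenfunction-by-eigenfunction comparison to an exact matching of the two point processes away from the boundary set whose expected size was bounded above. Everything else is a direct application of Propositions \ref{Proposition3} and \ref{Proposition4}, Corollary \ref{Corollary2}, and the classical superposition criterion.
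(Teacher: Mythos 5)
Your proposal is correct and follows the same overall scheme as the paper's sketch: decouple $H_N$ into independent sub-boxes of poly-logarithmic size via localization, control the block intensities with Proposition \ref{Proposition3}, kill double points with Corollary \ref{Corollary2}, and conclude by the superposition (Grigelionis) criterion; your verification of conditions (i)--(iii) matches the paper's estimates (4.4), (4.5), (4.7) essentially line for line. The one place you genuinely diverge is the decoupling bookkeeping. The paper interleaves the main blocks $\Lambda_\alpha$ of size $K\log N$ with explicit corridors $\Lambda_{\alpha,1}$ of size $K_1\log N$, assigns eigenvalues to blocks by localization center, and discards the corridor eigenvalues outright using Proposition \ref{Proposition1} (estimate (4.2), under $K_1L=o(K)$); you instead use contiguous blocks of size $(\log N)^2$ with a uniformly random offset $t$ and discard eigenvalues whose centers fall within $C_0\log N$ of a block boundary, bounding their expected number by Fubini in $t$ together with Proposition \ref{Proposition3} applied to $H_N$ itself. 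Both devices do the same job; the random-translate argument is arguably cleaner in that it needs no choice of two separate scales $K\gg K_1$, while the paper's corridor construction makes the disjointness of the enlarged boxes $\Lambda_\alpha'$ (hence the independence of their spectra) completely explicit. One small quantitative slip: the energy error incurred when truncating an eigenfunction whose center is at distance $C_0\log N$ from a block boundary is $O(N^{-c C_0})$, not $O(e^{-c\ell})$ with $\ell=(\log N)^2$; this is harmless, but you must take $C_0$ large (depending on the Lyapunov exponent) to beat the $1/N$ eigenvalue-spacing scale, exactly as the paper takes the constant $A$ in (4.1) sufficiently large.
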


At the end of the paper, we will make some comments on eigenvalue spacings for the Anderson-Bernoulli
(A-B) model, where in \eqref{1.1} the $v_n$ are $\{0, 1\}$-valued.
Further results in  line of the above for A-B models with certain special couplings $\lambda$ will appear in \cite{B3}.

\section
{\bf Proof of Proposition 1}

Set $\lambda=1$ in \eqref{1.1}. We denote
\be\label{2.1}
M_n=M_n(E)=\prod^1_{j=n} \begin{pmatrix} E-v_j&-1\\ 1&0\end{pmatrix}
\ee
the usual transfer operators.
Thus the equation $H\xi =E\xi$ is equivalent to
\be\label{2.2}
M_n\begin{pmatrix} \xi_1\\ \xi_0\end{pmatrix} = \begin{pmatrix} \xi_{n+1}\\ \xi_n\end{pmatrix}.
\ee

Considering a finite scale $[1, N]$, let $H_{[1, N]}$ be the restriction of $H$ with
Dirichlet boundary conditions.
Fix $I=[E_0-\delta, E_0+\delta]$ and assume $H_{[1, N]}$ has an eigenvalue $E\in I$ with
eigenvector $\xi=(\xi_j)_{1\leq j\leq N}$.
Then
\be\label {2.3}
M_N(E)\begin{pmatrix} \xi_1\\ 0\end{pmatrix} =\begin{pmatrix} 0\\ \xi_N\end{pmatrix}.
\ee
Assume $|\xi_1|\geq |\xi_N|$ (otherwise replace $M_N$ by $M_N^{-1}$ which can be treated similarly).
It follows from \eqref{2.3} that
\be\label{2.4}
\Vert M_N(E) e_1\Vert\leq 1
\ee
with $(e_1, e_2)$ the $\mathbb R^2$-unit vectors.
On the other hand, from the large deviation estimates, we have that
\be\label{2.5}
\log\Vert M_N(E_0)e_1\Vert> cN
\ee
with probability at least $1-e^{-cN}$ (in the sequel, $c, C$ will denote various constants that
may depend on the potential).

Write
\be\label{2.6}
\big|\log\Vert M_N(E) e_1\Vert -\log \Vert M_N(E_0)e_1\Vert\big|\leq \int^\delta_{-\delta} 
\Big| \frac d{dt} [\log\Vert M_N(E_0+t)e_1\Vert]\Big|dt.
\ee
The integrand in \eqref{2.6} is clearly bounded by
\be\label{2.7}
\sum_{j=1, 2} \,  \sum^N_{n=1} \frac {|\langle M_{N-n}^{(v_N, \ldots, v_{n+1})} (E_0+t) e_1, e_j\rangle|.
|\langle M_{n-1}^{(v_{n-1}, \ldots, v_1)} (E_0+t) e_1, e_1\rangle|}
{\Vert M_N^{(v_N, \ldots, v_1)}(E_0+t)e_1\Vert}
\ee
\be\label{2.8}
\leq 2|E-E_0|\sum^N_{n=1} \frac {\Vert M_{N-n}^{(v_N, \ldots, v_{n+1})} (E_0+t)\Vert}
{\Vert M_{N-n}^{(v_N, \ldots, v_{n+1})} (E_0+t) \zeta_n\Vert}
\ee
where
\be\label{2.9}
\zeta_n = \frac {M_{n-1}^{(v_{n-1}, \ldots, v_1)} (E_0+t) e_1}
{\Vert M_{n-1}^{(v_{n-1}, \ldots, v_1)} (E_0+t)e_1\Vert}
\ee
depends only on the variables $v_1, \ldots, v_{n-1}$.

At this point, we invoke some results from \cite{B1}.
It follows from the discussion in \cite{B1}, \S5 on $SO$'s with H\"older potential that for
$\ell> C=C(V)$, the inequality
\be\label{2.10}
\mathbb E_{v_1, \ldots, v_\ell} [\Vert M_\ell(\zeta)\Vert <\ve\Vert M_\ell\Vert]\lesssim \ve
\ee
holds for any $\ve>0$ and unit vector $\zeta \in\mathbb R^2$, $M_\ell = M_\ell^{(v_1, \ldots, v_\ell)}$.

A word of explanation.
It is proved in \cite{B1} that if we take $n$ large enough, the map $(v_1,\ldots, v_n)\mapsto M_n^{(v_n, \ldots, v_n)}$ defines a
bounded density on $SL_2(\mathbb R)$.
Fix then some $n=O(1)$ with the above property and write for $\ell>n$,
$$
\Vert M_\ell(\zeta)\Vert \geq |\langle M_n(\zeta), M^*_{\ell-n} e_j\rangle| \qquad (j= 1, 2)
$$
noting that here $M_n$ and $M_{\ell-n}$ are independent as functions of the potential.
Choose $j$ such that $\Vert M^*_{\ell-n} e_j\Vert\sim\Vert M^*_{\ell-n}\Vert = M_{\ell-n}\Vert\sim \Vert M_\ell\Vert
$ and fix the vector $M^*_{\ell-n} e_j$.
Since then $(v_1, \ldots, v_n)\mapsto M_n(\zeta)$ defines a bounded density, inequality \eqref{2.10} holds.

Since always $\Vert M_\ell\Vert < C^\ell$ and $\Vert M_\ell(\zeta)\Vert> C^{-\ell}$, it clearly
follows from \eqref{2.10} that
\be\label{2.11}
\mathbb E_V\Big[\frac{\Vert M_\ell^{(V)} \Vert}{\Vert M_\ell^{(V)} (\zeta)\Vert}\Big]
\leq C\ell.
\ee
Therefore
\be\label{2.12}
\mathbb E_V[\eqref{2.8}]< CN^2\delta.
\ee
Hence, we showed that, assuming \eqref{2.5}, Spec$\, H_N^{(V)}\cap I\not=\phi$ with
probability at most $CN\delta$.
Therefore Spec$\,H_N^{(V)} \cap I\not= \phi$ with probability at most $CN\delta+ Ce^{-cN}$, proving
\eqref{1.11}.

\section
{\bf Proof of Propositions \ref{Proposition3} and \ref{Proposition4}}

Assume $\log \frac 1{|I|}< cN$ and set $M=C\log \big(N+\frac 1{|I|}\big)$ for appropriate constants
$c, C$.
From the theory of Anderson localization in 1D, the eigenvectors $\xi_\alpha$ of $H_N$, $|\xi_\alpha|=1$
satisfy
\be\label{3.1}
|\xi_\alpha (j)|< e^{-c|j-j_\alpha|} \text { for } |j-j_\alpha|>\frac M{10}
\ee
with probability at least $1-e^{-cM}$, with $j_\alpha$ the center of localization of $\xi_\alpha$.

The above statement is well-known and relies on the large deviation estimates for the transfer matrix.
Let us also point out however that the above (optimal) choice of $M$ is not really important in what follows and
taking for $M$ some power of the log would do as well.

We may therefore introduce a collection of intervals $(\Lambda_s)_{1\leq s\lesssim \frac NM}$ of
size $M$ covering $[1, N]$, such that for each $\alpha$, there is some $1\leq s\lesssim \frac NM$
satisfying
\be\label{3.2}
j_\alpha\in\Lambda_s \text { and } \Vert\xi_\alpha|_{[1, N]\backslash \Lambda_s}\Vert<
e^{-cM}
\ee
\be\label{3.3}
\Vert (H_{\Lambda_s} -E_\alpha) \xi_{\alpha, s}\Vert< e^{-cM}
\ee
with $\xi_{\alpha, s} =\xi_\alpha|\Lambda_s$.
Therefore dist$\,(E_\alpha$, Spec$\,H_{\Lambda_s})< e^{-cM}<\delta$.

Let us establish Proposition \ref{Proposition3}.
Denoting $\Lambda_1$ and $\Lambda_{s_*}$ the intervals appearing at the boundary of $[1, N]$, one obtains by a well-known argument that
\be\label{3.4}
\mathbb E[Tr \mathcal X_I(H_N)]= N.\mathcal N(I)+O\big(e^{-cM}+\mathbb E[Tr \mathcal X_{\tilde I}(H_{\Lambda_1})]+\mathbb E [Tr 
\mathcal X_{\tilde I}(H_{\Lambda_{s_*}})]\big)
\ee
with $\tilde I=[E_0-2\delta, E_0+2\delta]$.
Invoking then Proposition \ref{Proposition1} and Corollary \ref{Corollary2}, we obtain
\be\label{3.5}
\mathbb E[Tr\mathcal X_I(H_{\Lambda_s})] < ce^{-cM}+CM\delta+CM^3 \delta^{1+\beta}< CM\delta
\ee
by the choice of $M$ and assuming $(\log N)^2 \delta^\beta<1$, as we may.

Substituting \eqref{3.5} in \eqref{3.4} gives then
$$
\begin{aligned}
&N\int_I k(E)dE+O(M\delta)=\\
&N k(E_0)|I|+O\Big(N\delta^2+\delta\log \Big(N+\frac 1\delta\Big)\Big)
\end{aligned}
$$
since $k$ is Lipschitz.
This proves \eqref{1.13}.

Next, we prove Proposition \ref{Proposition4}.

Assume $E_\alpha, E_{\alpha'} \in I, \alpha\not=\alpha'$.
We distinguish two cases.
\smallskip

\noindent
{\bf Case 1.} $|j_\alpha -j_{\alpha'}|>CM$.

Here $C$ is taken large enough as to ensure that the corresponding boxes $\Lambda_s, \Lambda_{s'}$
introduced above are disjoint. Thus
\be\label{3.6}
\text{Spec\,} H_{\Lambda_s}\cap I\not=\phi
\ee
\be\label{3.7}
\text{Spec\,} H_{\Lambda_{s'}} \cap I\not= \phi.
\ee
Since the events \eqref{3.6}, \eqref{3.7} are independent, it follows from Proposition
\ref{Proposition1} that the
probability for the joint event is at most
\be\label{3.8}
C e^{-cM}+CM^2 \delta^2<CM^2\delta^2
\ee
by our choice of $M$.
Summing over the pairs $s, s'\lesssim \frac NM$ gives therefore the bound $CN^2\delta^2$ for
the probability of a Case 1 event.
\medskip

\noindent
{\bf Case 2.} $|j_\alpha -j_{\alpha'}|\leq CM$.

We obtain an interval $\Lambda$ as union of at most $C$ consecutive $\Lambda_s$-intervals such that
\eqref{3.2}, \eqref{3.3} hold with $\Lambda_s$ replaced by $\Lambda$ for both $(\xi_\alpha,
E_\alpha)$, $(\xi_{\alpha'}, E_{\alpha'})$.
This implies that Spec$\,H_\Lambda\cap \tilde I$ contains at least two elements.
By Corollary \ref{Corollary2}, the probability for this is at most $CM^3\delta^{1+\beta}$.
Hence, we obtain the bound $CM^2N\delta^{1+\beta}$ for the Case 2 event.

The final estimate is therefore
$$
e^{-cM}+CN^2\delta^2 +CM^2N\delta^{1+\beta}
$$
and \eqref{1.14} follows from our choice of $M$.

\section
{\bf Sketch of the proof of Proposition \ref{Proposition5}}

Next we briefly discuss local eigenvalue statistics, following \cite{G-K}.

The Wegner and Minami type estimates obtained in Proposition 3 and 4  above permit to reproduce
essentially the        analysis from [G-K] proving local Poisson statistics
for the eigenvalues of $H^\omega_N$.
We sketch the details (recall that we consider a 1D model with H\"older site distribution).

Let $M=K \log N, M_1=K_1\log N$ with $K\gg K_1\gg 1$ ($\rightarrow \infty$ with $N$) and partition
$$
\Lambda=[1, N] =\Lambda_1\cup \Lambda_{1, 1}\cup\Lambda_2 \cup\Lambda_{2, 1}\cup \ldots
=                      \bigcup_{\alpha\lesssim \frac N{M+M_1}}
(\Lambda_\alpha\cup\Lambda_{\alpha, 1})
$$
where $\Lambda_\alpha$ (resp. $\Lambda_{\alpha, 1}$) are $M$ (resp. $M_1$) intervals

\includegraphics[width=3.6 in]{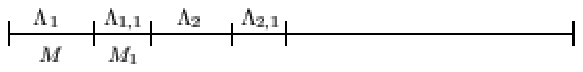} 

Denote
$$
\begin{matrix}
&\mathcal E_\alpha \ \ = \text { eigenvalue of $H_\Lambda$ with center of localization in } &
\Lambda_\alpha
\\
&\mathcal E_{\alpha, 1} = \qquad \qquad \qquad\hrulefill  \qquad\qquad\qquad  & \ \Lambda_{\alpha, 1}
\end{matrix}
$$
Let $\Lambda_\alpha' $ (resp. $\Lambda_{\alpha, 1}'$) be a neighborhood of $\Lambda_\alpha$ (resp.
$\Lambda_{\alpha,  1}$) of size $\sim \log N$ taken such as to ensure that
$$
\text{dist\,} (E, \text { Spec\,} H_{\Lambda_\alpha'} ) <\frac 1{N^A} \text { for $E\in\mathcal
E_\alpha$}
$$
($A$ a sufficiently large constant), and
\be\label{4.1}
\text{dist\,} (E, \text { Spec\,}H_{\Lambda_{\alpha, 1}'})< \frac 1{N^A} \text { for $E\in\mathcal
E_{\alpha, 1}$}.
\ee
Choosing $K_1$ large enough, we ensure that the $\Lambda_\alpha'$ are disjoint and hence
$\{\text{Spec\,}          H^\omega_{\Lambda_\alpha'}\}$ are independent.

Consider an energy interval
$$
I=\Big[E_0, E_0+\frac LN\Big]
$$
Denote
$$
P_\Omega(I) =\mathcal X_I(H_\Omega)
$$
with $L$ a large parameter, eventually  $\rightarrow\infty$.

We obtain from \eqref{1.11} and our choice of $M_1$ that
$$
\mathbb P[\mathcal E_{\alpha, 1} \cap I \not=\phi]  \lesssim   M_1|I|
$$
and hence
\be\label{4.2}
\mathbb P[\bigcup_\alpha \mathcal E_{\alpha, 1} \cap I\not= \phi]\lesssim \frac NM M_1|I| \lesssim \frac{LK_1}K =
o(1)
\ee
provided
\be\label{4.3}
K_1L =o(K).
\ee
Also, by \eqref{1.12}
\begin{align}\label{4.4}
&\mathbb P [|\mathcal E_\alpha\cap I|\geq 2]\leq\nonumber\\
&\mathbb P[H_{\Lambda_\alpha'} \text { has at least two eigenvalues in } \tilde I]\lesssim M^3
|I|^{1+\beta}<   M^3 \frac {L^{1+\beta}}{N^{1+\beta}}
\end{align}
so that
\be\label{4.5}
\mathbb P[\max_\alpha |\mathcal E_\alpha\cap I|\geq 2] \lesssim \frac NM (4.4) \lesssim \frac
{M^2L^{1+\beta}}{N^\beta} < N^{-\beta/2}.
\ee
Next, we introduce  the (partially defined) random variables
\be\label{4.6}
E_\alpha(V) =\sum_{E\in\text { Spec\,} H_{\Lambda_\alpha'} } E \, 1_I(E) \text { provided
$|$Spec      $H_{\Lambda_\alpha'}\cap I|\leq 1$}.
\ee
Thus the $E_\alpha, \alpha =1, \ldots, \frac N{M+M_1}$ take values in $I$, are independent
and have the same distribution.

Let $J\subset I$ be an interval, $|J|$ of the order of $\frac 1N$.
Then by \eqref{4.4} and Proposition \ref{Proposition3}.
\be\label{4.7}
\mathbb E[1_J(E_\alpha)]= \mathbb E [Tr\, P_{\Lambda_\alpha'} (J)]+ O\Big(\frac
1{N^{1+\beta/2}}\Big) =k(E_0)\Big(1+O\Big(\frac 1K\Big)\Big) |J|M'
\ee
where $M' =|\Lambda_\alpha'|$.

Therefore $\{N(E_\alpha -E_0)1_I(E_\alpha)\} _{\alpha\leq \frac N{M+M_1}}$ satisfies Poisson statistics (in a
weak sense), proving Proposition \ref{Proposition5}.

\section
{\bf Comments on the Bernoulli case}

Consider the model \eqref{1.1} with $V=(v_n)_{n\in \mathbb Z}$ independent $\{0, 1\}$-valued.
For large $|\lambda|$, $H$ does not have a bounded density of states.
It was shown in \cite{B2} that for certain small algebraic values of the coupling constant $\lambda$,
$k(E)=\frac {d\mathcal N}{dE}$ can be made arbitralily smooth (see \cite{B2} for the precise statement).
In particular $k\in L^\infty$ and one could ask if Proposition \ref{Proposition4} remains valid in this
situation.
One could actually conjecture that the analogue of Proposition \ref{Proposition4} holds for the A-B model
in 1D, at small disorder.
This problem will be pursued further in \cite{B3}.
What we prove here is an eigenvalue separation property at finite scale for the A-B model at arbitrary disorder
$\lambda\not=0$.
Denote again $H_N$ the restriction of $H$ to $[1, N]$ with Dirichlet boundary conditions. We have

\begin{proposition}\label{Proposition6}
With large probability, the eigenvalues of $H_N$ are at least $N^{-C}$ separated, $C=C(\lambda)$.
\end{proposition}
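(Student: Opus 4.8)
The plan is to reduce the eigenvalue separation question to a lower bound on the gap between two eigenvalues $E, E'$ of $H_N$, and to estimate the probability that this gap is small by exploiting the discreteness of the Bernoulli randomness together with the transfer-matrix description of eigenvalues. First I would record that, because $V$ is $\{0,1\}$-valued, the characteristic polynomial $p_N(E) = \det(E - H_N)$ is, for each fixed realization $V$, a monic integer-coefficient-free polynomial of degree $N$ in $E$ whose coefficients are polynomials in $\lambda$ with integer coefficients and bounded degree; more precisely $p_N(E) = \langle e_1, M_N(E) e_1\rangle$-type quantities built from the transfer matrices \eqref{2.1} (with $v_j$ replaced by $\lambda v_j$). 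The resultant $\mathrm{Res}(p_N, p_N')$, which vanishes iff $p_N$ has a repeated root, is then a nonzero integer polynomial in $\lambda$ of degree at most $C N^2$; choosing $\lambda$ algebraic and not a root of any of these resultants — or more robustly, using that $|\lambda|\neq 0$ and a compactness/perturbation argument — one sees that generically $H_N$ has simple spectrum. But simplicity alone is not quantitative, so the real work is a lower bound on $|E - E'|$.

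The key step is the following dichotomy, run for each pair $E \ne E'$ in $\mathrm{Spec}\, H_N$. Either (i) $E$ and $E'$ come from the \emph{same} realization and we bound the minimal gap from below using a quantitative bound on the discriminant, or (ii) we localize each eigenvalue near the spectrum of a restriction $H_{\Lambda}$ to a window $\Lambda$ of size $M = C\log N$, as in Section 3, via the Anderson-localization estimate \eqref{3.1}--\eqref{3.3} (which in the Bernoulli case still holds at any disorder, since positivity of the Lyapunov exponent and the large-deviation estimate are valid). For (i): the eigenvalues of $H_N$ are the roots of $p_N$; since $p_N$ has coefficients in a fixed finitely generated ring over $\mathbb{Z}[\lambda]$ with coefficient sizes bounded by $C^N$, the discriminant of $p_N$ is a nonzero algebraic number whose house (maximum modulus of conjugates) is at most $e^{CN}$ and whose denominator is controlled, so by a Liouville-type inequality $|\mathrm{disc}\, p_N| \ge e^{-CN}$, whence the minimal gap between distinct roots is at least $e^{-CN} \ge N^{-C}$ — wait, $e^{-CN}$ is far smaller than $N^{-C}$, so this crude bound is not enough and must be upgraded. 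The fix is to not estimate the full discriminant but, for a candidate close pair $(E, E')$ with $|E-E'| < N^{-C}$, to use the localization reduction (ii) \emph{first}: both $E$ and $E'$ lie within $N^{-A}$ of $\mathrm{Spec}\, H_\Lambda$ for a window $\Lambda$ of length $M = C\log N$, and $H_\Lambda$ depends on at most $M$ Bernoulli variables. Now the separation of eigenvalues of an $M \times M$ matrix with $\{0,\lambda\}$ diagonal entries: its discriminant is a nonzero number (for $\lambda \neq 0$, by the simplicity argument applied at scale $M$) lying in a \emph{finite} set of cardinality at most $2^M = N^{C}$, and each nonzero element of this finite set is bounded below in modulus by some $\kappa = \kappa(\lambda) > 0$ \emph{independent of $M$}? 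That last claim is false as stated (the bound degrades with $M$), so the correct quantitative input is a Liouville bound at scale $M$: $|\mathrm{disc}\, p_\Lambda| \ge e^{-CM} = N^{-C}$, which \emph{does} suffice, giving gap $\ge N^{-C}$ between eigenvalues of $H_\Lambda$, hence (after accounting for the $N^{-A}$ localization error with $A$ large) gap $\ge N^{-C}/2$ between $E$ and $E'$.

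Assembling: with probability at least $1 - e^{-cM} = 1 - N^{-c'}$ the localization bounds \eqref{3.1}--\eqref{3.3} hold for all eigenvectors simultaneously; on this event every pair $E \ne E'$ of eigenvalues of $H_N$ is, after the windowing reduction, within $N^{-A}$ of eigenvalues of some $H_\Lambda$ with $|\Lambda| \le CM = C\log N$, and the Liouville estimate for $\mathrm{disc}\, p_\Lambda$ forces $|E - E'| \ge N^{-C}$ for a constant $C = C(\lambda)$. I expect the main obstacle to be making the resultant/discriminant arithmetic genuinely quantitative and uniform: one must show that $\mathrm{disc}\, p_\Lambda$, as an algebraic number depending on $\lambda$ and on the finitely many Bernoulli choices in $\Lambda$, is either exactly zero (a degenerate coupling, excluded for $\lambda$ in a full-measure or algebraic-generic set) or bounded below by $e^{-C|\Lambda|}$; handling the coupling constants $\lambda$ for which some $\mathrm{disc}\, p_\Lambda$ vanishes identically — and confirming these form a negligible exceptional set so that the statement holds for arbitrary $\lambda \ne 0$ up to such exceptions, or that no such $\lambda \ne 0$ exists — is the delicate point, and is presumably where the restriction $C = C(\lambda)$ and the hypothesis $\lambda \neq 0$ enter.
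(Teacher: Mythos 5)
Your proposal takes a deterministic, arithmetic route (discriminants/resultants of characteristic polynomials plus Liouville lower bounds), and this approach has two gaps that I do not see how to repair. First, the critical case is a pair of close eigenvalues whose eigenvectors are localized in \emph{different} windows $\Lambda,\Lambda'$ of size $M\sim\log N$. There the relevant quantity is not the discriminant of a single $p_\Lambda$ but the resultant of $p_\Lambda$ and $p_{\Lambda'}$, and this resultant is \emph{exactly zero} on the event that the two disjoint windows carry the same Bernoulli configuration --- an event of probability $2^{-O(M)}=N^{-O(1)}$ per pair, hence not negligible without a probabilistic union bound. No arithmetic lower bound can exclude it; one must estimate the probability that $\mathrm{Spec}\,H_{\Lambda'}$ comes within $\delta_0$ of a fixed $E_0\in\mathrm{Spec}\,H_\Lambda$. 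That is precisely what the paper does: it proves a repulsion lemma (Lemma \ref{Lemma7}, a Wronskian/transfer-matrix argument) showing that eigenvalues with $|E-E'|<N^{-C_1}$ have localization centers at distance $\gtrsim\log\frac1{|E-E'|}$, so the windows are disjoint and $H_\Lambda,H_{\Lambda'}$ are \emph{independent}; it then applies a finite-scale Wegner bound (Lemma \ref{Lemma6}, from H\"older regularity of the IDS) to get probability $\lesssim M\delta_0^\gamma$ for each fixed $E_0$, and sums over $E_0$ and over pairs of windows to get $N^2M^2\delta_0^\gamma$, which is small for $\delta_0=N^{-5/\gamma}$. Your proposal contains no probabilistic estimate of this kind, and without one the conclusion "with large probability" cannot be reached.

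Second, even in the single-window case your quantitative input fails: the Liouville bound $|\mathrm{disc}\,p_\Lambda|\ge e^{-CM}$ requires $\lambda$ to be algebraic of controlled height and degree, whereas Proposition \ref{Proposition6} is asserted for arbitrary $\lambda\neq 0$; for transcendental (or badly approximable algebraic) $\lambda$ the discriminant, though nonzero (tridiagonal matrices with nonvanishing off-diagonal entries have simple spectrum), admits no such uniform lower bound. You flag this difficulty yourself at the end but leave it unresolved; in the paper it simply does not arise, because the single-window case is subsumed by the repulsion lemma (two eigenvalues of the full $H_N$ that are closer than $N^{-C_1}$ cannot have nearby centers at all), and the only quantitative inputs are Anderson localization, H\"older continuity of the IDS, and independence --- all valid for every $\lambda\neq 0$.
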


A statement of this kind is known for random $SO$'s with H\"older site distribution of regularity $\beta>\frac 12$, in arbitrary dimension.
But note that our proof of Proposition \ref{Proposition6} is specifically 1D, as will be clear below.
There are three ingredients, each well-known.

\noindent
{\bf 1. Anderson localization}

Anderson localization holds also for the 1D A-B model at any disorder.
In fact, there is the following quantitative form.
Denote $\xi^{(1)}, \ldots, \xi^{(N)}$ the normalized eigenvectors of $H_N$.
Then, with large probability $(> 1-N^{-A})$, each $\xi^{(j)}$ is essentially localized on some interval of size $C(\lambda)\log N$, in the
sense that there is a center of localization $\nu_j\in [1, N]$ such that
\be\label{5.1}
|\xi_n^{(j)}|< e^{-c(\lambda)|n-\nu_j|} \text { for } |n-\nu_j|> C(\lambda) \log N.
\ee
\medskip

\noindent
{\bf 2. H\"older regularity of the IDS}

The IDS $\mathcal N(E)$ of $H$ is H\"older of exponent $\gamma=\gamma(\lambda)> 0$.
There are various proofs of this fact (see in particular \cite{C-K-M} and \cite{S-V-W}).
In fact, it was shown in \cite{B1} that $\gamma(\lambda)\to 1$ for $\lambda\to 0$ but we will not
need this here.
What we use is the following finite scale consequence.

\begin{lemma}\label{Lemma6}
Let $M\in\mathbb Z_+$, $E\in\mathbb R$, $\delta>0$.  Then
\begin{align}\label{5.2}
&\mathbb E[\text{there is a vector $\xi =(\xi_j)_{1\leq j\leq M}, \Vert\xi\Vert=1$, such that}
\nonumber\\
&\Vert(H_M-E)\xi\Vert<\delta, |\xi_1|<\delta, |\xi_M|<\delta] \leq CM\delta^\gamma.
\end{align}
\end{lemma}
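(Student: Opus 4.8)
The plan is to transfer the hypothesis, which concerns the single block $[1,M]$, into a lower bound on the eigenvalue count of a very long box, and then to read off \eqref{5.2} from the H\"older regularity of the integrated density of states $\mathcal N$ recalled just above the lemma. Throughout, let $p$ denote the probability on the left of \eqref{5.2}; we may assume $M\ge 2$, since for $M=1$ the event forces $\delta>1$ and the bound is trivial.

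First I would fix a large integer $L$, work on $\Lambda=[1,LM]$, and partition it into the consecutive blocks $B_k=[(k-1)M+1,kM]$, $k=1,\dots,L$. Let $A_k$ be the event that $H_{B_k}$ admits a unit vector $\xi^{(k)}$ supported on $B_k$ with $\Vert(H_{B_k}-E)\xi^{(k)}\Vert<\delta$ and with its first and last coordinates $<\delta$ in modulus. By translation invariance of the i.i.d.\ potential $\mathbb P[A_k]=p$, and $A_1,\dots,A_L$ are independent because they depend on disjoint blocks of coordinates. The key observation is that, on $A_k$, the extension $\tilde\xi^{(k)}$ of $\xi^{(k)}$ by zero to all of $\Lambda$ is a good approximate eigenvector: $H_\Lambda$ and $H_{B_k}$ act identically inside $B_k$, so $(H_\Lambda-E)\tilde\xi^{(k)}$ restricts to $(H_{B_k}-E)\xi^{(k)}$ on $B_k$ and, outside $B_k$, is supported only at the at most two sites of $\Lambda$ adjacent to $B_k$, where its values are $\pm$(the boundary coordinates of $\xi^{(k)}$), hence $<\delta$. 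Consequently $\Vert(H_\Lambda-E)\tilde\xi^{(k)}\Vert<\sqrt3\,\delta$, the residual $(H_\Lambda-E)\tilde\xi^{(k)}$ is supported in $[(k-1)M,\,kM+1]$, and in particular two such residuals are orthogonal whenever $|k-k'|\ge 2$.

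Next, let $R_L$ be the number of indices $k\le L$ with $A_k$ true, and extract from them a subset $S$ with no two consecutive indices and $|S|\ge R_L/2$. For $v=\sum_{k\in S}c_k\tilde\xi^{(k)}$ the $\tilde\xi^{(k)}$ with $k\in S$ are orthonormal and the residuals are pairwise orthogonal, so $\Vert(H_\Lambda-E)v\Vert^2=\sum_{k\in S}|c_k|^2\Vert(H_\Lambda-E)\tilde\xi^{(k)}\Vert^2\le 3\delta^2\Vert v\Vert^2$; by the min-max principle $H_\Lambda$ then has at least $|S|\ge R_L/2$ eigenvalues (with multiplicity) in $J:=[E-\sqrt3\,\delta,\,E+\sqrt3\,\delta]$. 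Dividing by $|\Lambda|=LM$ and letting $L\to\infty$, the law of large numbers gives $R_L/L\to p$, while the defining a.s.\ convergence of the IDS gives $\frac1{LM}\,\#\{\text{eigenvalues of }H_\Lambda\text{ in }J\}\to\mathcal N(J)$ (using that $\mathcal N$ is continuous). Hence $\tfrac{p}{2M}\le\mathcal N(J)$, and since $\mathcal N$ is $\gamma$-H\"older and $|J|=2\sqrt3\,\delta$ we get $\mathcal N(J)\le C\delta^\gamma$, so $p\le 2C\,M\delta^\gamma$, which is \eqref{5.2}.

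The step I expect to be the real point is keeping the loss down to a constant factor. A plain union bound over the $A_k$ only gives $\mathbb P[\bigcup_k A_k]\le\mathbb E[\#\{\text{eigenvalues of }H_\Lambda\text{ in }J\}]$, which is useless since the left side tends to $1$. What rescues this is the orthogonality of the residuals $(H_\Lambda-E)\tilde\xi^{(k)}$ for non-adjacent blocks: it forces $R_L$ successful blocks to produce of order $R_L$ genuine eigenvalues in the \emph{same} window $J$, rather than in a window widening with $R_L$, so that both sides grow linearly in $L$. And it is precisely the endpoint hypotheses $|\xi_1|,|\xi_M|<\delta$ in \eqref{5.2} that make the zero-extensions legitimate approximate eigenvectors — dropping them would leave an $O(1)$ residual at each block boundary and collapse the argument. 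Everything else (the min-max inequality, the ergodic convergence of the counting function, and the H\"older bound on $\mathcal N$) is standard.
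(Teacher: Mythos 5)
Your proposal is correct and follows essentially the same route as the paper: tile a long box by translates of $[1,M]$, use the smallness of the boundary coordinates to turn each successful block into an approximate eigenvector of the big Hamiltonian, count the resulting eigenvalues in a window of width $O(\delta)$, and divide by the volume to compare with $\mathcal N$ and invoke its H\"older regularity. You merely supply details the paper leaves implicit (the orthogonality of the residuals and the factor-of-two loss from discarding adjacent blocks, and almost-sure convergence plus the law of large numbers in place of the expectation of the eigenvalue count), so no further comment is needed.
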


The derivation is standard and we do  briefly recall the argument.

Take $N\to\infty$ and split $[1, N]$ in intervals of size $M$. Denoting $\tau$ the l.h.s. of \eqref{5.2}, we see that
$$
\mathbb E\big[\#(\text{Spec\,} H_N\cap [E-5\delta, E+5\delta])\big] \geq \frac NM\tau.
$$
Dividing both sides by $N$ and letting $N\to\infty$, one obtains that
$$
\frac \tau M\leq \mathcal N([E-5\delta, E+5\delta])
$$
where $\mathcal N$ is the IDS of $H$. 

\medskip

\noindent
{\bf 3. A repulsion phenomenon}

The next statement shows that eigenvectors with eigenvalues that are close together have their
centers far away.
The argument is based on the transfer matrix and hence strictly 1D.

\begin{lemma}\label{Lemma7}
Let $\xi, \xi'$ be distinct normalized eigenvectors of $H_N$ with centers $\nu, \nu'$,
\begin{align}\label{5.3}
H_N\xi&= E\xi\nonumber\\
H_N\xi'&=E'\xi.
\end{align}
Assuming $|E-E'|<N^{-C(\lambda)}$, it follows that
\be\label{5.4}
|\nu-\nu'|\gtrsim \log \frac 1{|E-E'|}.
\ee
\end{lemma}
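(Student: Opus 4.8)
The plan is to combine the discrete Wronskian with the localization bound \eqref{5.1} and the standard large deviation estimates for the transfer matrices, all of which hold off an event of probability $<N^{-A}$; in what follows $c=c(\lambda)>0$ and $C(\lambda)$ denote various constants depending on the disorder. Write $v_n=\binom{\xi_{n+1}}{\xi_n}$, $v_n'=\binom{\xi_{n+1}'}{\xi_n'}$ and $W_n=\xi_n\xi_{n+1}'-\xi_{n+1}\xi_n'$. From the recursions $\xi_{n+1}=(E-v_n)\xi_n-\xi_{n-1}$, $\xi_{n+1}'=(E'-v_n)\xi_n'-\xi_{n-1}'$ one gets $W_n-W_{n-1}=(E'-E)\xi_n\xi_n'$, which telescopes (since $\xi_0=\xi_0'=0$) to $W_n=(E'-E)\sum_{j\le n}\xi_j\xi_j'$, whence $|W_n|\le|E-E'|$ for every $n$ by Cauchy--Schwarz. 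As $|W_n|=\|v_n\|\,\|v_n'\|\,|\sin\angle(v_n,v_n')|$, the two transfer vectors are nearly parallel at every site: $|\sin\angle(v_n,v_n')|\le|E-E'|/(\|v_n\|\,\|v_n'\|)$.

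Now argue by contradiction, assuming $R:=|\nu-\nu'|<\kappa\log\frac{1}{|E-E'|}$ for a small $\kappa=\kappa(\lambda)$. Let $\Lambda$ be the interval obtained from $[\min(\nu,\nu'),\max(\nu,\nu')]$ by attaching a margin of length $C_1\log\frac{1}{|E-E'|}$ ($C_1=C_1(\lambda)$) on each side. By \eqref{5.1} both $\xi,\xi'$ carry all but $|E-E'|^{cC_1}$ of their $\ell^2$-mass on $\Lambda$; and since a definite fraction of $\|\xi\|_2^2$ sits within $C(\lambda)\log N$ of $\nu$, we may pick $n_0$ with $|n_0-\nu|\lesssim\log N$, $\|v_{n_0}\|\gtrsim(\log N)^{-1/2}$. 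The large deviation bounds give, uniformly on $\Lambda$, $\|v_n\|\gtrsim e^{-c|n-\nu|}N^{-O(1)}$ and $\|v_n'\|\gtrsim e^{-c|n-\nu'|}N^{-O(1)}$; in particular, for $\kappa$ small and $|E-E'|<N^{-C(\lambda)}$ with $C(\lambda)$ large, $\cos\angle(v_{n_0},v_{n_0}')\ge\tfrac12$. Set $a:=\langle v_{n_0}',v_{n_0}\rangle/\|v_{n_0}\|^2$; then $|a|\gtrsim\|v_{n_0}'\|/\|v_{n_0}\|\gtrsim|E-E'|^{c\kappa}N^{-O(1)}$ and $\|v_{n_0}'-a\,v_{n_0}\|\le|E-E'|/\|v_{n_0}\|$.

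Propagating $v_{n_0},v_{n_0}'$ to an arbitrary $n\in\Lambda$ by the transfer matrices at $E$, resp. $E'$, and using $\|M_{[k+1,\ell]}(E)-M_{[k+1,\ell]}(E')\|\le|E-E'|\,(\ell-k)\,e^{c(\ell-k)}$ (from $\partial_E M$ and the large deviation bounds), and since $|\Lambda|=R+2C_1\log\frac{1}{|E-E'|}<(\kappa+2C_1)\log\frac{1}{|E-E'|}$, one obtains $v_n'=a\,v_n+e_n$ with $\|e_n\|\lesssim|E-E'|^{\,1-c(\kappa+2C_1)}N^{O(1)}$, hence $\xi_n'=a\,\xi_n+O(\|e_n\|)$ on $\Lambda$. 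Therefore
\[
0=\langle\xi,\xi'\rangle=a\,\|\xi\|_{\ell^2(\Lambda)}^2+O\Big(\textstyle\sum_{n\in\Lambda}|\xi_n|\,\|e_n\|+|E-E'|^{cC_1}\Big),
\]
so $|a|\lesssim|E-E'|^{\,1-c(\kappa+2C_1)}N^{O(1)}+|E-E'|^{cC_1}$. If $C_1(\lambda)$ and $\kappa(\lambda)$ are chosen small enough that $c(\kappa+C_1)<\tfrac12$ and $\kappa<C_1$, the two exponents on the right both exceed $c\kappa$, so for $C(\lambda)$ large enough to absorb the powers of $N$ the right-hand side is $\ll|E-E'|^{c\kappa}N^{-O(1)}\lesssim|a|$ --- a contradiction. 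Hence $|\nu-\nu'|\ge\kappa\log\frac{1}{|E-E'|}$, which is \eqref{5.4}.

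I expect the only real difficulty to be this balancing of scales: $|a|$ behaves like $|E-E'|^{c\kappa}$ from below, the propagated error like $|E-E'|^{1-c(\kappa+2C_1)}$, and the localization leakage like $|E-E'|^{cC_1}$, so the window $\Lambda$ must be wide enough (margin $\sim\log\frac{1}{|E-E'|}$) for the leakage to be negligible yet narrow enough that the exponential amplification of errors across $\Lambda$ stays below $|a|$; this is what forces $\kappa(\lambda),C_1(\lambda)$ small and $C(\lambda)$ large and makes the hypothesis necessarily quantitative. It is also essential to use the sharp large deviation rate here rather than a crude $\|M_\ell\|\le C^\ell$ bound. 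The remaining ingredients --- the Wronskian identity, the mass concentration from \eqref{5.1}, and the bound on $\|M(E)-M(E')\|$ --- are routine.
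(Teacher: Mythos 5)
Your argument is correct in outline, and it is genuinely different from the paper's, although both pivot on the discrete Wronskian $W_n$. The paper does not use the Dirichlet boundary condition to bound $W_n$; instead it forms the auxiliary vector $D=\xi_\nu'\xi-\xi_\nu\xi'$, which is an approximate eigenvector at energy $E$ vanishing at $\nu$, uses orthogonality to force $\|D\|\geq|\xi_\nu|\gtrsim M^{-1/2}$, propagates the recursion from the node $D_\nu=0$ to get the lower bound $|W_\nu|+2\delta>10^{-|\Lambda|}$, and then plays the near-constancy of $W_n$ against its smallness at distance $\sim|\Lambda|$ from $\nu$ (where $\xi$ has decayed), concluding $\delta\gtrsim|\Lambda|^{-1}10^{-|\Lambda|}$. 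You instead observe that $W_0=0$ forces $|W_n|\leq|E-E'|$ \emph{globally}, read this as near-parallelism of the transfer vectors $v_n,v_n'$, propagate it across the (assumed short) common localization window to get $\xi'\approx a\,\xi$ there, and contradict $\langle\xi,\xi'\rangle=0$. Your route costs more bookkeeping --- you must compare $M(E)$ with $M(E')$ across $\Lambda$ and balance the three exponents $c\kappa$, $1-c(\kappa+2C_1)$, $cC_1$, whereas the paper avoids energy comparison entirely by working with $D$ at the single energy $E$ --- but it buys a cleaner conceptual picture (two orthogonal vectors cannot both be multiples of the same transfer-matrix orbit on a common window). Two small remarks: your global bound $|W_n|\leq|E-E'|$ would in fact let one short-circuit the paper's own proof (it upgrades \eqref{5.10} directly to $3\delta>10^{-|\Lambda|}$); and the ``sharp large deviation rate'' you flag as essential is not actually needed --- the crude deterministic bound $\Vert M_\ell\Vert\leq(2+|\lambda|+|E|)^\ell$ suffices everywhere, since the constants $\kappa, C_1$ are at your disposal and only the order of quantifiers ($c$ first, then $C_1$, then $\kappa$, then the threshold $C(\lambda)$ in the hypothesis) matters.
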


\begin{proof}
Let $\delta =|E-E'|$ and assume $1\leq \nu \leq \nu'\leq N$.
Take $M=C(\lambda)\log^N$ satisfying \eqref{5.1} and $\Lambda$ an $M$-neighborhood of 
$[\nu, \nu']$ in $[1, N]$.

In particular, we ensure that
\be\label{5.5}
|\xi_n|, |\xi_n'|< N^{-10} \text { for } n\not\in \Lambda.
\ee
We can assume that $|\xi_\nu|>\frac 1{2\sqrt M}$.
Since $\Vert\xi_\nu' \xi - \xi_\nu
\xi'\Vert\geq  |\xi_\nu|> \frac 1{2\sqrt M}$,
it follows from \eqref{5.5} that for some $n_0\in\Lambda$
\be\label{5.6}
|\xi_\nu'\xi_{n_0} -\xi_\nu\xi_{n_0}'|\gtrsim \frac 1{\sqrt M\sqrt{|\Lambda|}}.
\ee
Next, denote for $n\in [1, N]$
$$
D_n=\xi_\nu'\xi_n-\xi_\nu \xi_n'
$$
and
$$
W_n=\xi_n' \xi_{n+1} -\xi_n\xi_{n+1}'.
$$
Clearly, using the equations \eqref{5.3}
\be\label{5.7}
\Vert(H_N-E) D\Vert \leq \delta
\ee\and
\be\label{5.8}
\sum_{1\leq n<N} |W_n-W_{n+1}|<\delta.
\ee
Let $\nu <N$.
Since $D_\nu=0$, it follows from \eqref{5.7} that
\be\label{5.9}
|D_n|\leq (2+|\lambda|+|E|)^{|n-\nu|} (|D_{\nu+1}| +2\delta).
\ee
(If $\nu =N$, replace $\nu+1$ by $\nu-1$).
From \eqref{5.6}, \eqref{5.9}
$$
\frac 1{\sqrt M\sqrt{|\Lambda|}} \lesssim (2+|\lambda| +|E|)^{|\Lambda|} (|D_{\nu+1} +2\delta)
$$
and since $D_{\nu+1}=W_\nu$, it follows that
\be\label{5.10}
|W_\nu|+2\delta> 10^{-|\Lambda|}.
\ee
Invoking \eqref{5.8}, we obtain for $n\in [1, N]$
\be\label{5.11}
|W_n|>10^{-|\Lambda|} -(|n-\nu|+1)\delta.
\ee
On the other hand, by \eqref{5.1}
$$
|W_n|\leq |\xi_n|+|\xi_{n+1}|< e^{-c\lambda^2|n-\nu|} \text { for } |n-\nu|>C(\lambda)\log N.
$$
Taking $|n-\nu|\sim |\Lambda|$ appropriately, it follows that
$$
\delta\gtrsim \frac 1{|\Lambda|} 10^{-|\Lambda|}
$$
and hence
$$
|\nu-\nu'|+M\gtrsim \log\frac 1\delta.
$$
Lemma \ref{Lemma7} follows.
\end{proof}
\medskip

\noindent
{\bf Proof of Proposition \ref{Proposition6}.}

Assume $H_N$ has two eigenvalues $E, E'$ such that
$$
|E-E'|<\delta< N^{-C_1}
$$
where $C_1$ is the constant from Lemma \ref{Lemma7}.
It follows that the corresponding eigenvectors $\xi, \xi'$ have resp. centers $\nu, \nu'\in[1, N]$ satisfying
\be\label{5.12}
|\nu-\nu'|\gtrsim \log \frac 1\delta.
\ee
Introduce $\delta_0>\delta$ (to specify), $M=C_2(\lambda) \log \frac 1{\delta_0}$ and $\Lambda=[\nu-M, \nu+M]\cap [1, N]$,
$\Lambda'= [\nu'-M, \nu'+M]\cap [1, N]$.
Let $\tilde\xi =\frac {\xi|_\Lambda}{\Vert\xi|_\Lambda\Vert},\tilde\xi'=\frac {\xi'|_{\Lambda'}} {\Vert\xi'|_{\Lambda'}\Vert}$.
According to \eqref{5.1}, choose $M$ such that
\be\label{5.13}
\Vert (H_\Lambda -E)\tilde\xi\Vert < e^{-c\lambda^2M}<\delta_0 \text { and } |\xi|_{\partial\Lambda}|<\delta_0
\ee
and
\be\label{5.14}
\Vert H_{\Lambda'} -E')\tilde \xi'\Vert<\delta_0 \text { and }  |\xi'|_{\partial\Lambda '}|<\delta_0.
\ee
Requiring
$$
\log\frac 1\delta>C_3M
$$
\eqref{5.12} will ensure disjointness of $\Lambda, \Lambda'$.
Hence $H_\Lambda, H_{\Lambda'}$ are independent as functions of $V$.
It follows in particular from \eqref{5.13} that dist$\,(E, \text{Spec\,} H_\Lambda)<\delta_0$, hence $|E-E_0|<\delta_0$ for some
$E_0\in\text{Spec\,} H_\Lambda$.
Having fixed $E_0$, \eqref{5.14} implies that
\be\label{5.15}
\Vert (H_{\Lambda'}-E_0)\xi'\Vert < |E-E'|+2\delta_0 <3\delta_0.
\ee
Apply Lemma \ref{Lemma6} to $H_{\Lambda'}$ in order to deduce that the probability for
\eqref {5.15} to hold with $E_0 \in \text {Spec\,} H_\Lambda$ fixed, is at most $CM\delta_0^\gamma$.
Summing over all $E_0\in\text {\, Spec\,} H_\Lambda$ and then over all pairs of boxes $\Lambda, \Lambda'$
gives the bound
\be\label{5.16}
O(N^2M^2\delta_0^\gamma)= O\Big(N^2\Big(\log \frac 1{\delta_0}\Big)^2 \delta_0^\gamma\Big)
<N^2 \delta_0^{\gamma/2}.
\ee
It remains to take $\delta_0 =N^{-\frac 5\gamma}$, $\log \frac 1\delta> C\log \frac 1{\delta_0}$.
\medskip

\noindent
{\bf Acknowledgment.} The author is grateful to A.~Klein for his comments and to the UC Berkeley mathematics department
for their hospitality.

\end{document}